\documentclass[11pt,a4paper]{article}

\usepackage{amsmath,amssymb,latexsym,amsfonts,mathrsfs}	
\usepackage{palatino}
\usepackage{amsthm,thmtools}	
\usepackage{hyperref}	
\hypersetup{
	colorlinks=true,	
	linkcolor=[rgb]{0.9, 0.0, 0.0},	
	citecolor=[rgb]{0.0, 0.6, 0.0},	
	filecolor=magenta,	
	urlcolor=blue	
}
\usepackage{authblk}
\usepackage[numbers]{natbib}

\numberwithin{equation}{section}

\newtheorem{defn}[equation]{Def{i}nition}
\newtheorem{prop}[equation]{Proposition}

\newtheorem{theorem}[equation]{Theorem}
\newtheorem{corol}[equation]{Corollary}

\theoremstyle{remark}
\newtheorem{remark}[equation]{Remark}

\theoremstyle{definition}

\providecommand{\keywords}[1]
{
  \small	
  \textbf{\textit{Keywords:}} #1
}

\title{The algebra of invariants of a complete path algebra}

\author[1]{Samuel Quirino}
\affil[1]{Universidade Federal de Minas Gerais, Instituto de Ciências Exatas, \break Departamento de Matemática, Belo Horizonte, MG, Brazil}

\begin{document}

\footnotetext{\textit{Email address:} sasq@ufmg.br \hfill \url{https://orcid.org/0000-0002-7352-3211}}

\maketitle

\begin{abstract}
We prove that the algebra of invariants of a complete path algebra under the action of a homogeneous group of continuous algebra automorphisms is a complete path algebra and preserves finite or tame representation type.
\end{abstract}

\keywords{Algebra of invariants, Complete path algebra, Complete tensor algebra, Homogeneous group action.}


\section{Introduction}\label{sec1}
Let $k$ be a field of any characteristic and $Q=(Q_0,Q_1)$ be a quiver. The path algebra $k(Q)$ is isomorphic to the tensor algebra $T_E(M)=\bigoplus_{n=0}^{\infty}M^{\otimes_n}$, where $M^{\otimes_0}=E=\bigoplus_{i\in Q_0} ke_i$ is the semisimple algebra generated by the stationary paths $e_i$, $M^{\otimes_1}=M=\bigoplus_{i,j\in Q_0} VQ_{i,j}$ is the bimodule over $E$, with $VQ_{i,j}$ being the vector space over $k$ generated by all arrows from source $i$ to target $j$, and $M^{\otimes_n}=M\otimes_E (M^{\otimes_{n-1}})$ is the tensor product over $E$ for each $n\geq 2$. 

\begin{remark}
	The object $(Q_0,VQ_{i,j})_{i,j\in Q_0}$ is called $k$-quiver. The tensor algebra $T_E(M)$ is a way of dealing with path algebras when the choice of basis might be a problem. We recommend \cite{IMQ} for details on the relation between $k$-quivers and (complete) path algebras.
\end{remark}

Let $G$ be a group of automorphisms of $T_E(M)$. An element $x \in T_E(M)$ is an invariant of $G$ if $g(x) = x$ for all $g \in G$. The set of all invariants of $G$ is a subalgebra of $T_E(M)$, denoted by $T_E(M)^G$, called the algebra of invariants of $G$. 
An element $x \in T_E(M)$ is homogeneous of degree $n$ if $x\in M^{\otimes_n}$ for some $n\in\mathbb{N}$. An automorphism $g \in G$ is homogeneous if $g(M^{\otimes_n}) = M^{\otimes_n}$ for every $n \in \mathbb{N}$. We say that $G$ is homogeneous if its elements are homogeneous.

If $E = k$ and $M\neq \{0\}$, i.e. the quiver $Q$ has a single vertex and all its arrows are loops, then $T_k(M)$ is a free algebra. In the late 70's, V. K. Kharchenko and D. R. Lane proved that if $G$ is a homogeneous group of algebra automorphisms of the tensor algebra $T_k(M)$, there exists a subspace  $U\subseteq T_k(M)$ generated by homogeneous elements such that $T_k(M)^G \cong T_k(U)$, see \cite[Proposition 1]{Kharchenko78} and \cite[Lemma 1.8]{Lane76}.

In 2016, C. Cibils and E. N. Marcos proved that if $VQ_{i,j}$ is finite dimensional for each $i,j \in Q_0$, i.e. the quiver $Q$ has only finitely many arrows between any pair of vertices, and $G$ is a finite group of homogeneous algebra automorphisms of the tensor algebra $T_E(M)$, then there exists a subspace  $U\subseteq T_E(M)$ generated by homogeneous elements such that $T_E(M)^G \cong T_E(U)$, see \cite[Theorem 4.1]{CMarcos}. Moreover, the representation type of the invariant algebra $T_E(M)^G$ is preserved in case $T_E(M)$ is of finite or tame representation type, see \cite[Theorem 5.16]{CMarcos}.

We show that the above results holds true for complete path algebras.

The complete path algebra is defined as the completion of the path algebra, which is isomorphic to a complete tensor algebra, see \cite[Proposition 8.1]{Simson1}.

From now on, treat $k$ as a discrete topological field. Denote by \break $\widehat{T}_{\Sigma}(V)=\prod_{n=0}^{\infty}V^{\widehat{\otimes}_n}$ the complete tensor algebra of the topologically semisimple pointed pseudocompact algebra $V^{\widehat{\otimes}_0}=\Sigma=\prod_{i\in Q_0} ke_i$ and the pseudocompact $\Sigma$-bimodule $V^{\widehat{\otimes}_1}=V=\prod_{i,j\in Q_0} VQ_{i,j}$, where $V^{\widehat{\otimes}_n} = V \widehat{\otimes}_{\Sigma} V^{\widehat{\otimes}_{n-1}}$ is the complete tensor product for $n\geq 2$.

%
A pseudocompact algebra $A$ is the inverse limit of finite dimensional algebras $\{A_i\}_{i \in I}$, treated as discrete topological algebras, $A = \varprojlim_{i \in I}\, A_i$. A (left) pseudocompact $A$-module is the inverse limit of discrete finite dimensional (left) $A$-modules $\{U_i\}_{i \in I}$, $U = \varprojlim_{i \in I}\, U_i$. If $A$ and $B$ are pseudocompact algebras, then a pseudocompact $A$-$B$-bimodule is a $A$-$B$-bimodule which is a left pseudocompact $A$-module and a right pseudocompact $B$-module.

The inverse limit inherits a topology, which is complete (like any inverse limit of topological sets) and Hausdorff (since it is the inverse limit of discrete topological sets).

\begin{remark}
More often, a pseudocompact algebra is presented by the equivalent definition of a complete Hausdorff topological algebra possessing a fundamental system of neighborhoods of 0 consisting of (two sided) ideals with finite codimension that intersect in 0, see \cite{Brumer}.
\end{remark}

The complete tensor product and the complete tensor algebra are, in some way, the completions of the tensor product and the tensor algebra, respectively, see \cite[\S 2]{Brumer} and \cite[\S 7.5]{Gabriel}. The complete tensor algebra has the following universal property:

Let $A$, $B$ be pseudocompact algebras and $U$ be a pseudocompact $B$-bimodule. Given a continuous algebra homomorphism $\alpha_0:B\to A$ and a continuous $B$-bimodule homomorphism $\alpha_1:U\to A$, with $A$ treated as a $B$-bimodule via $\alpha_0$, then there exists a unique continuous algebra homomorphism $\alpha:\widehat{T}_{B}(U)\to A$ such that $\left.\alpha\right|_{B} = \alpha_0$ and $\left.\alpha\right|_U=\alpha_1$. See \cite[Lemma 2.11]{IM}.

\textbf{Acknowledgements.} This research was developed during the authors Phd Thesis, with advisors Kostiantyn Iusenko and John William MacQuarrie, \cite{Samuel_tese}. The author was partially supported by CAPES -- Finance Code 001.

\section{Invariants of a ring of non-commutative power series rings}
\label{sec:invariants_of_a_power_series_ring}

We must start with Kharchenko and Lane Theorem, since Cibils and Marcos results depend on it. For that, we recall the definitions of weak algorithm and inverse weak algorithm, see \cite[\S2.2 and \S2.9]{Cohn}. Consider $R$ a ring and $\mathbb{N}$ the set of natural numbers including zero.

%
A function $\mu : R \to \mathbb{N} \cup \{-\infty\}$ is a filtration on $R$ if satisfies: 
\begin{enumerate}
	\item $\mu(x) \geqslant 0$ for $x \neq 0$ and $\mu(0) = -\infty$;
	\item $\mu(x-y) \leqslant \textnormal{max}\{\mu(x),\mu(y)\}$;
	\item $\mu(x y)\leqslant \mu(x) + \mu(y)$;
	\item $\mu(1) = 0.$
\end{enumerate}
In case $\mu(x y)= \mu(x) + \mu(y)$, $\mu$ is called a degree function. In general, we say that $\mu(x)$ is the degree of $x$.

We say that $R$ is a graded ring if it can be expressed as the direct sum of abelian groups $R = \bigoplus_{i \in \mathbb{N}} R_i$ such that $R_i R_j \subseteq R_{i+j}$. In this case, $R_0$ is a subring and each $R_i$ is a $R_0$-bimodule.

A graded ring $R = \bigoplus_{i \in \mathbb{N}} R_i$ has a natural degree function $\mu : R \to \mathbb{N} \cup \{-\infty\}$ given by:
\begin{equation*}
	\label{equ:graded_ring_filtration}
	\mu(x) = 
	\begin{cases}
		\textnormal{min}\{n \,:\, x \in \bigcup_{i=0}^{n} R_i\}, & \hbox{if } x \neq 0; \\
		-\infty, & \hbox{if } x = 0.
	\end{cases}
\end{equation*}
In case $x \in R_n$, we say that $x$ is a homogeneous element of degree $n$.

Let $R$ be a ring with filtration $\mu$. For each $n \in \mathbb{N} \cup \{-\infty\}$, denote the set of elements of degree at most $n$ by $R_{(n)} = \{x \in R \,|\, \mu(x) \leqslant n\}$. Then the $R_{(n)}$ are subgroups of the additive group $R$ satisfying:
\begin{enumerate}
	\item $\{0\} = R_{(-\infty)} \subseteq R_{(0)} \subseteq R_{(1)} \subseteq \cdots$;
	\item $\bigcup R_{(n)} = R$;
	\item $R_{(i)} R_{(j)} \subseteq R_{(i+j)}$;
	\item $1 \in R_{(0)}$.
\end{enumerate}

We can form the associated graded ring $\textnormal{gr} (R) = \bigoplus_{n=0}^{\infty} \frac{R_{(i)}}{R_{(i-1)}}$, where the subgroup $R_{(-1)}=R_{(-\infty)}=\{0\}$, thus $\frac{R_{(0)}}{R_{(-1)}}=R_{(0)}$. The natural degree function of the associated graded ring of a filtered ring corresponds to its filtration.

\begin{defn}
	\label{def:weak_algorithm}
	Let $R$ be a ring with filtration $\mu$. A family $\{a_i\}_{i\in I}$ of elements of $R$ is right $\mu$-dependent if some $a_i = 0$ or there exist $b_i \in R$, almost all $0$, such that
	\begin{equation*}
		\mu\bigr(\sum a_i b_i\bigl) < \textnormal{max}\{\mu(a_i)+\mu(b_i)\}.
	\end{equation*}
	An element $a \in R$ is right $\mu$-dependent on $\{a_i\}_{i\in I}$ if $a = 0$ or there exist $b_i \in R$, almost all $0$, such that
	\begin{equation*}
		\mu\bigl(a - \sum a_i b_i\bigr) < \mu(a) \quad \textnormal{and} \quad \mu(a_i) + \mu(b_i) \leqslant \mu(a),\,\forall i\in I.
	\end{equation*}
	The ring $R$ satisfies the $n$-term weak algorithm (with respect to the filtration $\mu$) if for any (right) $\mu$-dependent family with at most $n$ members, say $a_1,\dots,a_m$, ($m \leqslant n$), with $\mu(a_1) \leqslant \dots \leqslant \mu(a_m)$, some $a_i$ is $\mu$-dependent on $a_1, \dots, a_{i-1}$. The ring $R$ satisfies the weak algorithm if it satisfies the $n$-term weak algorithm for all $n \in \mathbb{N}$.
\end{defn}

\begin{prop}[{\cite[Proposition 2.4.2]{Cohn}}]
	Let $R$ be an algebra over a field $k$ with a filtration $\mu$ such that $R_0=k$. Then, R is the free associative (non-commutative) $k$-algebra on a set $X$ and $\mu$ is the formal degree induced from $\mu:X\to\mathbb{N}_{+}$ if and only if $R$ satisfies the weak algorithm.
\end{prop}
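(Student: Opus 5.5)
The statement is an equivalence, so I would prove the two directions separately. Throughout I read the hypothesis $R_0=k$ as $R_{(0)}=k$, and I treat a relation between leading forms, that is, images in $\textnormal{gr}(R)$, as the basic object.

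\emph{($\Rightarrow$)} Suppose $R=k\langle X\rangle$ and $\mu$ is the formal degree.
\begin{enumerate}
\item Since $\mu$ is a degree function coming from a grading, I would identify $\textnormal{gr}(R)$ with $R$.
\item Let $a_1,\dots,a_m$ be right $\mu$-dependent with $\mu(a_1)\leqslant\dots\leqslant\mu(a_m)$. Pass to the top homogeneous components $\bar a_i,\bar b_i$ of the terms reaching the maximum $d=\max\{\mu(a_i)+\mu(b_i)\}$. This gives a homogeneous relation $\sum \bar a_i\bar b_i=0$ with every term of degree $d$. We may assume $\bar b_m\neq 0$ by discarding indices; the maximal index kept plays the role of $m$.
\item Set $s=\deg \bar b_m=d-\mu(a_m)$. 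Then $s$ is the smallest degree among the nonzero $\bar b_i$, since $\deg\bar b_i=d-\mu(a_i)\geqslant s$.
\item Choose a monomial $w$ of length $s$ occurring in $\bar b_m$ with coefficient $c\neq0$. Let $x\mapsto xw^*$ be the right cofactor map: the $k$-linear map sending a monomial $uw$ to $u$ and every other monomial to $0$.
\item Because $|w|\leqslant \deg \bar b_i$ for every $i$, we get $(\bar a_i\bar b_i)w^*=\bar a_i(\bar b_iw^*)$. Applying $w^*$ to the relation therefore gives
\[
c\,\bar a_m=-\sum_{i<m}\bar a_i(\bar b_iw^*),
\]
with $\mu(a_i)+\deg(\bar b_iw^*)=\mu(a_m)$.
\item Dividing by $c$, this says exactly that $a_m$ is right $\mu$-dependent on $a_1,\dots,a_{m-1}$, which is the weak algorithm.
\end{enumerate}

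\emph{($\Leftarrow$)} Suppose $R$ satisfies the weak algorithm.
\begin{enumerate}
\item \emph{Construct $X$ degree by degree.} For $n\geqslant1$, let $S_n\subseteq R_{(n)}$ be the subspace spanned by $R_{(n-1)}$ together with all products $yz$ with $y,z\in R_{(n-1)}$ and $\mu(y)+\mu(z)\leqslant n$; these are the elements right $\mu$-dependent on lower-degree elements. Choose $X_n\subseteq R_{(n)}$ whose images form a basis of $R_{(n)}/S_n$, and set $X=\bigcup_n X_n$.
\item \emph{$X$ generates $R$.} By induction on $\mu(a)$, using $R_{(0)}=k$, every $a$ differs from a $k$-combination of elements of $X$ of degree $\mu(a)$ by an element of $S_{\mu(a)}$. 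Iterating shows $X$ generates $R$ and that each element is a sum of monomials in $X$ of formal degree at most $\mu(a)$.
\item \emph{$X$ is right $\mu$-independent.} If a finite subfamily were right $\mu$-dependent, the weak algorithm (ordering by degree) would make some $x\in X_n$ right $\mu$-dependent on elements of $X$ of degree at most $n$. The contribution from elements of degree exactly $n$ with scalar coefficients contradicts the linear independence of $X_n$ modulo $S_n$. All remaining terms lie in $S_n$, which contradicts the choice of $X_n$.
\item \emph{Monomials are independent and $\mu$ is the formal degree.} From right $\mu$-independence, $\mu\bigl(\sum x b_x\bigr)=\max\{\mu(x)+\mu(b_x)\}$. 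Inducting on length, $\mu$ of a monomial equals its formal degree, and a nontrivial $k$-combination of monomials never vanishes. Hence $R=k\langle X\rangle$ with $\mu$ the formal degree.
\end{enumerate}

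I expect step 3 of ($\Leftarrow$) to be the main obstacle. One has to check that the element produced by the weak algorithm really lies in the span that $X_n$ was chosen to avoid, which requires careful bookkeeping of which coefficients $b_i$ are scalars. I would handle it by separating the $b_i$ of degree $0$ from those of positive degree.
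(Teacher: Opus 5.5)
Your proof is correct. The paper gives no argument of its own here and simply quotes Cohn, and your two directions are essentially Cohn's proof: right cofactors $w^*$ applied to the leading-form relation in $k\langle X\rangle$, and, for the converse, a basis $X_n$ of $R_{(n)}/S_n$ shown to be right $\mu$-independent, followed by induction on word length.

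One wording point in ($\Rightarrow$): since $\mu$ on $X$ need not be identically $1$, $w$ should be a monomial of weighted degree $s$, not of length $s$. The identity $(\bar a_i\bar b_i)w^*=\bar a_i(\bar b_iw^*)$ still holds because the weights are positive. If a monomial $z$ of $\bar b_i$ has $\deg z\geqslant s$ and $vz$ ends in $w$, then $w$ must itself be a suffix of $z$.
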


This characterization was essential for the proofs provided by Kharchenko and, independetly, Lane for the following theorem:

\begin{theorem}[{\cite[Theorem 6.10.3]{Cohn}}]
	\label{teo:Kharchenko-Lane}
	Let $k\langle X\rangle$ be a free associative (non-commutative) algebra and $G$ a group of automorphisms of $k\langle X\rangle$. If $G$ is homogeneous with respect to the grading on $k\langle X\rangle$ induced by some function $d : X \to \mathbb{N}_{>0}$, then the algebra of invariants of $G$ is free on a set that is homogeneous with respect to $d$.
\end{theorem}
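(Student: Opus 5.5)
The plan is to apply Cohn's characterization (the Proposition above, \cite[Proposition 2.4.2]{Cohn}) to $R = k\langle X\rangle^G$ with the filtration $\mu$ obtained by restricting the $d$-degree function of $k\langle X\rangle$. Since every $g\in G$ preserves each homogeneous component $k\langle X\rangle_n$, an element $x=\sum_n x_n$ is invariant exactly when each component $x_n$ is invariant. Hence $R=\bigoplus_n R_n$ with $R_n = R\cap k\langle X\rangle_n$, so $R$ is a graded subalgebra. Because $d$ takes values in $\mathbb{N}_{>0}$, we have $R_0=k$. The restricted $\mu$ is still a degree function on $R$, namely the natural degree function of this grading. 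Once we know that $R$ satisfies the weak algorithm, Cohn's proposition gives that $R$ is free on some set $Y$ with $\mu$ the formal degree. One then arranges $Y$ to consist of homogeneous elements. For instance, take $Y$ to be a union of bases of complements of $(R_+^2)_n$ in $(R_+)_n$, where $R_+=\bigoplus_{n>0}R_n$; in the graded setting this set is a free generating set.

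The main step is to verify the $n$-term weak algorithm for $R$. Let $a_1,\dots,a_m\in R$ be a right $\mu$-dependent family in $R$, with $\mu(a_1)\le\dots\le\mu(a_m)$. The dependence is witnessed by $b_i\in R\subseteq k\langle X\rangle$, so the family is also $\mu$-dependent in $k\langle X\rangle$. The free algebra satisfies the weak algorithm, so some $a_j$ is $\mu$-dependent on $a_1,\dots,a_{j-1}$ in $k\langle X\rangle$. The obstacle is that the coefficients $c_i$ from this dependence lie in $k\langle X\rangle$, not in $R$. To fix this we pass to the leading forms. Because $\mu$ is a degree function and everything is graded, we may replace each $a_i$ by its top-degree component, which is again invariant, and then work entirely with homogeneous elements. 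Dependence of homogeneous elements then means that $a_j$ lies in the right ideal generated by $a_1,\dots,a_{j-1}$ with homogeneous coefficients.

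To push the coefficients into $R$ I would use the structure of free algebras. By the weak algorithm, we can refine so that $a_1,\dots,a_{j-1}$ are right $\mu$-independent, and then the right ideal they generate is free with basis $a_1,\dots,a_{j-1}$, so the coefficients $c_i$ in $a_j=\sum a_i c_i$ are unique. Applying $g\in G$ gives $a_j=\sum a_i\, g(c_i)$, and uniqueness forces $g(c_i)=c_i$. Thus $c_i\in R$, and $a_j$ is dependent on $a_1,\dots,a_{j-1}$ inside $R$.

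The delicate point is the reduction step: we must choose a $\mu$-independent subfamily consisting of invariant elements, which we get by induction on $j$, taking the first index where dependence occurs. We also need $\mu$-independence of homogeneous elements in $k\langle X\rangle$ to give uniqueness of homogeneous coefficients. This is exactly the statement that right $\mu$-independent families in a ring with weak algorithm freely generate right ideals \cite[\S2.2]{Cohn}.
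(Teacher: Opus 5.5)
Your proof is correct. It is the standard weak-algorithm argument (Kharchenko; Cohn, Theorem 6.10.3), which the paper does not reprove but only cites, noting that the weak-algorithm characterization is the essential tool. As in that argument, you reduce to homogeneous invariant leading forms, take the least $j$ at which $a_j$ is right $\mu$-dependent on its predecessors in $k\langle X\rangle$ (so $a_1,\dots,a_{j-1}$ are right $\mu$-independent, hence right linearly independent), and use uniqueness of the coefficients to force them into $k\langle X\rangle^G$.
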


Fortunately, Cohn has developed a similar characterization for the ring of non-commutative power series rings in terms of the inverse weak algorithm, which we present below.

%
A function $\nu : R \to \mathbb{N} \cup \{\infty\}$ is an inverse filtration on $R$ if satisfies: 
\begin{enumerate}
	\item $\nu(x)\in\mathbb{N}$ for $x\neq 0$ and $\nu(0) = \infty$;
	\item $\nu(x-y)\geqslant \textnormal{min}\{\nu(x),\nu(y)\}$;
	\item $\nu(x y)\geqslant \nu(x)+\nu(y)$.
\end{enumerate}
In case $\nu(x y)=\nu(x)+\nu(y)$, $\nu$ is an order function.

Let $R$ be a ring with inverse filtration $\nu$. Denote by $R_{[n]} = \{x \in R \,|\, \nu(x) \geqslant n\}$, which satisfies:
\begin{enumerate}
	\item $R = R_{[0]} \supseteq R_{[1]} \supseteq \cdots$; \label{equ:associated_graded_ring}
	\item $R_{[i]} R_{[j]} \subseteq R_{[i+j]}$;
	\item $\bigcap R_{[n]} = 0$.
\end{enumerate}
	
The associated graded ring is $\textnormal{gr}[R]=\bigoplus_{n=0}^{\infty} \frac{R_{[n]}}{R_{[n+1]}}$. If $x \in R$, $x \neq 0$, and $\nu(x) = n$, denote by $\overline{x} = x + R_{[n+1]} \in \frac{R_{[n]}}{R_{[n+1]}}$.

One could define an inverse weak algorithm in terms of the inverse filtration in a similar manner as the weak algorithm was defined. We prefer the following equivalent definition:

\begin{defn}
	\label{def:inverse_weak_algorithm}
	Let $R$ be a ring with inverse filtration $\nu$. $R$ satisfies the ($n$-term) inverse weak algorithm if the associated graded ring $\textnormal{gr}[R]$ satisfies the ($n$-term) weak algorithm (with respect to its natural degree function).
\end{defn}

If $R$ is an inversely filtered ring, then it is a topological ring with $\{R_{[i]}\,|\,i\in \mathbb{N}\}$ being its neighborhood base at 0. Denote by $\widehat{R}$ its completion. There exists a natural embedding $R\to\widehat{R}$, which respects the (inverse) filtration. If this embedding is an isomorphism, we say that $R$ is complete.

\begin{prop}[{\cite[Proposition 2.9.8]{Cohn}}]
	\label{pro:power_series_ring_characterization}
	Let $A$ be a complete inversely filtered algebra such that $\frac{A}{A_{[1]}} = k$. The algebra $A$ is a ring of non-commutative power series rings 
	if, and only if, $A$ satisfies the inverse weak algorithm.
\end{prop}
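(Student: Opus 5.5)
The natural route is to pass to the associated graded ring $\textnormal{gr}[A]$ and use the characterization of free algebras by the weak algorithm (\cite[Proposition 2.4.2]{Cohn}, stated above). Then lift the result back to $A$ using completeness. Here a ring of non-commutative power series means the completion $k\langle\langle X\rangle\rangle$ of a free algebra $k\langle X\rangle$ with respect to the order function induced by a degree map $d:X\to\mathbb{N}_{>0}$. Its elements are the formal sums $\sum_n f_n$ with $f_n$ homogeneous of degree $n$.

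\emph{Forward direction.} Suppose $A=k\langle\langle X\rangle\rangle$ with $\nu$ the order function, i.e.\ the least degree of a nonzero homogeneous component. Then $A_{[n]}/A_{[n+1]}$ is the space of homogeneous polynomials of degree $n$. Multiplication of leading terms is multiplication of lowest components, so $\textnormal{gr}[A]\cong k\langle X\rangle$ as graded algebras, with natural degree equal to $d$. Since $\textnormal{gr}[A]_0=k$, \cite[Proposition 2.4.2]{Cohn} shows that $\textnormal{gr}[A]$ satisfies the weak algorithm. Hence $A$ satisfies the inverse weak algorithm in the sense of Definition \ref{def:inverse_weak_algorithm}.

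\emph{Converse.} Suppose $\textnormal{gr}[A]$ satisfies the weak algorithm. Its degree-zero part is $A/A_{[1]}=k$, so by \cite[Proposition 2.4.2]{Cohn} it is free on a set $\overline{X}$ of homogeneous elements of positive degrees $d(\bar x)$.
\begin{enumerate}
\item For each $\bar x\in\overline{X}$, choose $x\in A$ with $\nu(x)=d(\bar x)$ and image $\bar x$. Let $X$ be the set of these lifts.
\item Define $\varphi:k\langle\langle X\rangle\rangle\to A$ by $\varphi\bigl(\sum_n f_n\bigr)=\sum_n f_n(X)$. Each $f_n(X)$ is a finite sum of products lying in $A_{[n]}$, so the partial sums form a Cauchy sequence. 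Since $A$ is complete and $\bigcap A_{[n]}=0$, the limit exists and is unique, and $\varphi$ is a continuous algebra homomorphism.
\item \emph{Injectivity.} If $f\neq 0$ has lowest nonzero component $f_m$, then $\varphi(f)\equiv f_m(X)\pmod{A_{[m+1]}}$. The class of $f_m(X)$ in $A_{[m]}/A_{[m+1]}$ is $f_m(\overline{X})$, which is nonzero because $\overline{X}$ is free in $\textnormal{gr}[A]$. Hence $\nu(\varphi(f))=m$. In particular $\varphi$ preserves the order function and is injective.
\item \emph{Surjectivity.} Given $a\in A$ with $\nu(a)=m$, write $\bar a=g_m(\overline{X})$ with $g_m$ homogeneous of degree $m$. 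Then $\nu\bigl(a-g_m(X)\bigr)>m$. Iterating produces homogeneous $g_m,g_{m+1},\dots$ with $a-\sum_{n\le N}g_n(X)\in A_{[N+1]}$. Thus $a=\varphi\bigl(\sum g_n\bigr)$, again by completeness and Hausdorffness.
\end{enumerate}

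The main obstacle is the bookkeeping in step 3. There one must check that the leading term of $\varphi(f)$ in $\textnormal{gr}[A]$ really is $f_m(\overline{X})$. This uses that $\nu$ restricted to monomials in $X$ is additive modulo higher terms, which follows since $\overline{X}$ generates a free, hence domain-like, graded algebra. One must also make sure that the homogeneous approximants in step 4 assemble into a legitimate element of the completion even when $X$ has infinitely many elements of a given degree. This works because at each stage only a single finite polynomial $g_n$ is added.
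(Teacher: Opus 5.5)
The paper gives no proof of its own here and only cites \cite[Proposition 2.9.8]{Cohn}. Your argument is the standard one and is correct: pass to $\textnormal{gr}[A]$, invoke \cite[Proposition 2.4.2]{Cohn}, lift a free generating set, and use completeness together with $\bigcap_n A_{[n]}=0$ to obtain a $\nu$-preserving isomorphism $k\langle\langle X\rangle\rangle\to A$. You also rightly read the forward direction as requiring $\nu$ to be the order function. The one step you assert without justification is that \cite[Proposition 2.4.2]{Cohn} yields a \emph{homogeneous} free generating set $\overline{X}$, which you genuinely need in steps 1 and 4. That proposition only says the degree function is the formal degree, but you can fix this by replacing each generator with its top-degree homogeneous component: the formal-degree property shows these leading components are still free and still span each graded piece.
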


\begin{corol}[{\cite[Corollary 2.9.9]{Cohn}}]
	\label{cor:closed_subalgebra_of_power_series_ring}
	Let $B \subseteq A$ be a closed subalgebra of the ring of non-commutative power series rings $A$. If $B$ satisfies the inverse weak algorithm, then $B$ is a ring of non-commutative power series rings.
\end{corol}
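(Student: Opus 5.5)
The plan is to reduce the statement directly to Proposition~\ref{pro:power_series_ring_characterization}, applied to $B$ itself with the inverse filtration it inherits from $A$. Let $\nu$ be the inverse filtration on $A$, and write $\nu_B = \nu|_B$. Then $\nu_B$ is again an inverse filtration, since the three axioms are inequalities between elements of $B$ and $B$ is a subring. Its filtration pieces are $B_{[n]} = B\cap A_{[n]}$. I read the hypothesis ``$B$ satisfies the inverse weak algorithm'' as referring to this restricted filtration, so that $\textnormal{gr}[B]$ satisfies the weak algorithm. Two things then remain to check: that $B$ is complete, and that $B/B_{[1]}=k$.

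For the quotient, $B$ is a $k$-subalgebra, so $k\subseteq B$. The inclusion $B\hookrightarrow A$ induces a map $B/B_{[1]} \to A/A_{[1]} = k$. This map is injective because $B_{[1]} = B\cap A_{[1]}$, and it is surjective because the image of $k\subseteq B$ already covers $k$. Hence $B/B_{[1]}\cong k$.

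For completeness, the topology defined on $B$ by the neighbourhood base $\{B_{[n]}\}$ is exactly the subspace topology induced from $A$, since $B_{[n]}=B\cap A_{[n]}$. It is Hausdorff because $\bigcap_n B_{[n]} \subseteq \bigcap_n A_{[n]} = 0$. Take a Cauchy sequence in $B$ for $\nu_B$. It is Cauchy in $A$, so it converges in $A$, because $A$, being a ring of non-commutative power series, is complete. Since $B$ is closed, the limit lies in $B$. Therefore the natural embedding $B\to\widehat{B}$ is surjective, and $B$ is complete.

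With these facts in hand, Proposition~\ref{pro:power_series_ring_characterization} applies to $(B,\nu_B)$ and shows that $B$ is a ring of non-commutative power series. I do not expect a genuine obstacle. The only point needing care is the completeness step: one must confirm that the inverse filtration topology on $B$ coincides with the subspace topology, so that closedness in $A$ translates into completeness of $B$. One should also make sure that the $k$ in $A/A_{[1]}=k$ is the image of the scalars, so that the quotient computation for $B$ goes through.
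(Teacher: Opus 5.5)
Your argument is correct and is the standard deduction behind the cited result; the paper gives no proof of its own, only the reference to Cohn. You restrict $\nu$ to $B$, use closedness of $B$ in the complete ring $A$ to get completeness of $B$, and use $k\subseteq B$ together with $B_{[1]}=B\cap A_{[1]}$ to get $B/B_{[1]}\cong k$, after which Proposition~\ref{pro:power_series_ring_characterization} applies to $(B,\nu|_B)$ (reading the hypothesis as referring to the restricted filtration, which is the intended meaning).
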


The complete tensor algebra $\widehat{T}_{\Sigma}(V)$ has an order function $\nu : \widehat{T}_{\Sigma}(V)\to \mathbb{N} \cup \{\infty\}$ given by $\nu(0)=\infty$ and, for $x\neq 0$, $\nu(x)=\textnormal{max}\{m\,|\,x \in \prod_{n=m}^{\infty} V^{\widehat{\otimes}_{n}}\}$.

Denote by $\widehat{T}_{\Sigma}(V)_{[n]} = \{x \in \widehat{T}_{\Sigma}(V) \,|\, \nu(x) \geq n\}$ and let 
\begin{equation*}
	gr [\widehat{T}_{\Sigma}(V)] =\bigoplus_{n=0}^{\infty} \frac{\widehat{T}_{\Sigma}(V)_{[n]}}{\widehat{T}_{\Sigma}(V)_{[n+1]}} \cong \bigoplus_{n=0}^{\infty} V^{\widehat{\otimes}_n} 
\end{equation*}
be the associated graded ring of $\widehat{T}_{\Sigma}(V)$. If $\Sigma$ and $V$ are finite dimensional, then $gr[\widehat{T}_{\Sigma}(V)] \cong T_{\Sigma}(V)$.
%

An element $x \in \widehat{T}_{\Sigma}(V)$ is called homogeneous if $x \in V^{\widehat{\otimes}_{n}}$ for some $n \in \mathbb{N}$. We say that a continuous algebra automorphisms $g$ of $\widehat{T}_{\Sigma}(V)$ is homogeneous if $g(V^{\widehat{\otimes}_{n}}) = V^{\widehat{\otimes}_{n}}$ for every $n \in \mathbb{N}$. In this case, $g$ induces an algebra automorphism in the associated graded algebra $gr[\widehat{T}_{\Sigma}(V)]$ given by $g(x_n + \widehat{T}_{\Sigma}(V)_{[n+1]}) = g(x_n) + \widehat{T}_{\Sigma}(V)_{[n+1]}$ for any representative $x_n$ of $\overline{x}_n \in \frac{\widehat{T}_{\Sigma}(V)_{[n]}}{\widehat{T}_{\Sigma}(V)_{[n+1]}}$. Moreover, if $G$ is a homogeneous group acting on $\widehat{T}_{\Sigma}(V)$, then $G$ also acts homogeneously on $gr[\widehat{T}_{\Sigma}(V)]$.

\begin{theorem}
	\label{teo:invariants_power_series_ring}
	Let $\widehat{T}_{k}(V)$ be a ring of non-commutative power series rings and $G$ be a homogeneous group of continuous algebra automorphisms of $\widehat{T}_{k}(V)$. Then, the algebra of invariants of $G$, $\widehat{T}_{k}(V)^G$, is a ring of non-commutative power series rings.
\end{theorem}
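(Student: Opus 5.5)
The plan is to use Corollary~\ref{cor:closed_subalgebra_of_power_series_ring} with $A=\widehat{T}_{k}(V)$ and $B=\widehat{T}_{k}(V)^G$. Two things must be checked: $B$ is a closed subalgebra of $A$, and $B$ satisfies the inverse weak algorithm with respect to the restriction of $\nu$. By Definition~\ref{def:inverse_weak_algorithm}, the second condition means $\textnormal{gr}[B]$ satisfies the weak algorithm. I intend to identify $\textnormal{gr}[B]$ with the algebra of invariants of $G$ acting on $\textnormal{gr}[\widehat{T}_{k}(V)]\cong \bigoplus_n V^{\widehat{\otimes}_n}$, which is a free algebra, and then apply Theorem~\ref{teo:Kharchenko-Lane}.

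\emph{Closedness.} Each $g\in G$ is continuous and $\widehat{T}_k(V)$ is Hausdorff. Hence $\{x: g(x)=x\}$ is closed, and $B=\bigcap_{g\in G}\{x : g(x)=x\}$ is a closed subalgebra.

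\emph{Invariants decompose homogeneously.} Write $x=(x_n)_{n}\in\prod_n V^{\widehat{\otimes}_n}$. Since $g$ is continuous and preserves each $V^{\widehat{\otimes}_n}$, we have $g(x)=(g(x_n))_n$: the partial sums converge to $x$, and $g$ commutes with this limit. So $x\in B$ if and only if every $x_n$ is invariant, which gives $B=\prod_n (V^{\widehat{\otimes}_n})^G$. Consequently $B_{[n]}/B_{[n+1]}\cong (V^{\widehat{\otimes}_n})^G$, and
\[
\textnormal{gr}[B]\cong \bigoplus_{n} (V^{\widehat{\otimes}_n})^G = \Bigl(\bigoplus_n V^{\widehat{\otimes}_n}\Bigr)^G = \textnormal{gr}[\widehat{T}_k(V)]^G,
\]
where $G$ acts on the graded ring as described before the theorem. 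This isomorphism respects degrees, so the natural degree function of $\textnormal{gr}[B]$ matches the grading on the right-hand side.

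\emph{Weak algorithm.} Since $\widehat{T}_k(V)$ is a ring of non-commutative power series rings, Proposition~\ref{pro:power_series_ring_characterization} shows that it satisfies the inverse weak algorithm. Then $\textnormal{gr}[\widehat{T}_k(V)]$ satisfies the weak algorithm with $R_0=k$, so by Cohn's Proposition 2.4.2 it is a free algebra $k\langle X\rangle$. The degree function is the formal degree induced by some $d:X\to\mathbb{N}_{>0}$; here $d\equiv 1$, since $X$ can be taken as a basis of $V$. The induced action of $G$ is by graded automorphisms, so Theorem~\ref{teo:Kharchenko-Lane} shows that $\textnormal{gr}[\widehat{T}_k(V)]^G$ is free on a homogeneous set. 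A free algebra with the degree function induced by a positive homogeneous grading satisfies the weak algorithm, again by Cohn's Proposition 2.4.2. Thus $\textnormal{gr}[B]$ satisfies the weak algorithm, and Corollary~\ref{cor:closed_subalgebra_of_power_series_ring} finishes the proof.

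\emph{Main obstacle.} The delicate point is the identification $\textnormal{gr}[B]\cong\textnormal{gr}[\widehat{T}_k(V)]^G$. In particular, $\textnormal{gr}[B]$ must carry exactly the natural degree function of a graded free algebra. This rests on homogeneity together with continuity of $g$, which lets invariance be tested componentwise.

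A further point arises if $V$ is infinite dimensional. Then $V^{\widehat{\otimes}_n}$ is a completed tensor power, and $\bigoplus_n V^{\widehat{\otimes}_n}$ need not be the free algebra on a basis of $V$. I would avoid this by using only what the hypotheses give: $\widehat{T}_k(V)$ is a power series ring, so its associated graded ring satisfies the weak algorithm. Cohn's characterization then makes it free, whatever its description in terms of $V$.
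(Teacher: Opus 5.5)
Your proposal is correct and follows essentially the same route as the paper: closedness of the invariants as an intersection of fixed-point sets, the identification $\textnormal{gr}[\widehat{T}_k(V)^G]=\textnormal{gr}[\widehat{T}_k(V)]^G$ via homogeneous components, and then Kharchenko--Lane combined with Cohn's characterizations of free algebras and power series rings. Your componentwise description $\widehat{T}_k(V)^G=\prod_n (V^{\widehat{\otimes}_n})^G$ just makes explicit the ``choosing homogeneous elements'' step that the paper only sketches; the aside that $d\equiv 1$ is not needed and only holds for finite-dimensional $V$, as you yourself note.
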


\begin{proof}
	In view of Proposition \ref{pro:power_series_ring_characterization} and Corollary \ref{cor:closed_subalgebra_of_power_series_ring}, it is sufficient to show that $\widehat{T}_{k}(V)^G$ is closed and $gr[\widehat{T}_{k}(V)^G]$ satisfies the weak algorithm. 
	
	It is closed because $\widehat{T}_{k}(V)^G = \bigcap_{g\in G} \ker (g - \textnormal{id})$.
	
	Since $gr[\widehat{T}_{k}(V)]$ satisfies the weak algorithm and $gr[\widehat{T}_{k}(V)]_0=k$, by Theorem \ref{teo:Kharchenko-Lane}, the algebra of invariantes $gr[\widehat{T}_{k}(V)]^G$ also satisfies the weak algorithm. It is clear that $gr[\widehat{T}_{k}(V)^G] \subseteq gr[\widehat{T}_{k}(V)]^G$. The other way around follows by choosing homogeneous elements. We give the details.
    
    Let $\overline{x} \in (gr[\widehat{T}_{k}(V)])^G$. Then, $\overline{x}$ can be uniquely expressed as $\overline{x} = \bigl(x_n + \widehat{T}_{k}(V)_{[n+1]}\bigr)_{n \geq 0}$, with $x_n \in V^{\widehat{\otimes}_n}$ and only finitely many of them nonzero. Thus, the element $x = (x_n)_{n \geq 0}$ belongs to $\widehat{T}_{k}(V)^G$ and, consequently, $\overline{x} \in gr[\widehat{T}_{k}(V)^G]$. 
\end{proof}

%
\section{Invariants of a complete path algebra}
\label{sec:invariants_of_a_complete_path_algebra}

With Theorem \ref{teo:invariants_power_series_ring}, we can apply the technics developed in \cite[]{CMarcos} to complete path algebras. 
The results presented in this section follows, with small changes, by \cite[Lemma 3.7, Proposition 3.8, Theorem 3.9 and Theorem 4.1]{CMarcos}.

%
In order to simplify the notation, we shall say that any nonzero vector space $VQ_{i,j}$ is an arrow space $V_a = VQ_{i,j}$ from $i$ to $j$. Thus, the subscript $a$ of $V_a$ means that there exists at least one arrow in $Q_1$ form the vertices $i$ to $j$ in $Q_0$. For any sequence of arrow spaces $V_{a_1}, V_{a_2}, \dots, V_{a_m}$, with $V_{a_n} = VQ_{i_{n-1},i_n}$, the vector space $V_{\omega} = V_{a_m} \widehat{\otimes}_{\Sigma} \cdots \widehat{\otimes}_{\Sigma} V_{a_2} \widehat{\otimes}_{\Sigma} V_{a_1} \subseteq V^{\widehat{\otimes}_m}$ is the space of path $\omega$ from $i_0$ to $i_m$ of length $m$ (in particular, any arrow space is a space of path of length $1$). Any subspace of $V_{\omega}$ which is a space of path is called subpath.

Let $G$ be a homogeneous group of continuous algebra automorphisms of $\widehat{T}_{\Sigma}(V)$. If $G$ is invariant on $\Sigma$, then the pseudocompact $\Sigma$-bimodules $V^{\widehat{\otimes}_{n}}$ are left $kG$-modules for all $n \in \mathbb{N}$, where $kG$ denotes the group algebra of $G$.

A 2-partition of a space of path ${\omega}$ is any two subpaths ${\omega_1}$ and ${\omega_2}$ such that $V_{\omega} = V_{\omega_2} \widehat{\otimes}_{\Sigma} V_{\omega_1}$. Write $\omega = \omega_2 \omega_1$ for short. Define $\varphi_{\omega_1,\omega_2} : V_{\omega_2}^G \widehat{\otimes}_{\Sigma} V_{\omega_1}^G \to V_{\omega}^G$ to be the canonical inclusion. Denote by 
\begin{equation*}
	\varphi_{\omega} = \sum_{\omega_2 \omega_1 = \omega} \varphi_{\omega_1,\omega_2}.
\end{equation*}
The image of $\varphi_{\omega}$ is called space of composite invariants. Any complement of $\textnormal{Im}\,{\varphi_{\omega}}$ in the space of invariants is called a space of irreducible invariants. 

Let us fix a complement and identify it by $V_{\omega,\textnormal{irr}}^G$. If $\omega$ has length $m$ and $p = (m_l,\dots,m_1)$ is any ordered $l$-partition of $m$, denote by
\begin{equation*}
	V_{\omega,p,\textnormal{irr}}^G = V_{\omega_{m_l},\textnormal{irr}}^G \widehat{\otimes}_{\Sigma} \cdots \widehat{\otimes}_{\Sigma} V_{\omega_{m_1},\textnormal{irr}}^G,
\end{equation*}
where $\omega_{m_n}$ are the unique subpath of $\omega$ of length $m_n$ such that $\omega = \omega_{m_l} \cdots \omega_{m_1}$. Consider the canonical map 
\begin{equation*}
	\psi_{\omega}:\bigoplus_p V_{\omega,p,\textnormal{irr}}^G \to V_{\omega}^G,
\end{equation*}
where the direct sum runs through all ordered partitions of length $m$ (the length of $\omega$).

%
\begin{prop}
	The map $\psi_{\omega}$ is surjective.
\end{prop}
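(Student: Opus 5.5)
Induction on the length $m$ of the space of path $\omega$. The target is to show that $V_{\omega}^G$ equals the sum over ordered partitions $p$ of the images of $V_{\omega,p,\textnormal{irr}}^G$, each viewed inside $V^{\widehat{\otimes}_m}$ via the canonical inclusion.

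\emph{Base case $m=1$.} Then $\omega$ is an arrow space, and its only 2-partitions involve a trivial subpath. More precisely, there are no composite invariants coming from two subpaths of positive length. So $\textnormal{Im}\,\varphi_{\omega}=0$ and $V_{\omega,\textnormal{irr}}^G=V_{\omega}^G$. The only partition is $p=(1)$, and $\psi_{\omega}$ is the identity.

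\emph{Inductive step, $m\geq 2$.} Assume surjectivity for all spaces of path of length less than $m$. Since $V^G_{\omega,\textnormal{irr}}$ is a complement of $\textnormal{Im}\,\varphi_{\omega}$ in $V_\omega^G$, we have
\begin{equation*}
V_{\omega}^G = V_{\omega,\textnormal{irr}}^G + \sum_{\omega_2\omega_1=\omega}\varphi_{\omega_1,\omega_2}\bigl(V_{\omega_2}^G\widehat{\otimes}_{\Sigma}V_{\omega_1}^G\bigr).
\end{equation*}
The first summand is the image of the trivial partition $p=(m)$. For each 2-partition $\omega=\omega_2\omega_1$, both subpaths have length less than $m$. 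By induction, $V_{\omega_i}^G$ is the image of $\bigoplus_{p_i}V_{\omega_i,p_i,\textnormal{irr}}^G$.

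The key step is to check that
\begin{equation*}
\bigl(\textstyle\sum_{p_2}\textnormal{Im}\,V_{\omega_2,p_2,\textnormal{irr}}^G\bigr)\widehat{\otimes}_{\Sigma}\bigl(\textstyle\sum_{p_1}\textnormal{Im}\,V_{\omega_1,p_1,\textnormal{irr}}^G\bigr)
\end{equation*}
lies in the sum of the images of $V^G_{\omega_2,p_2,\textnormal{irr}}\widehat{\otimes}_{\Sigma}V^G_{\omega_1,p_1,\textnormal{irr}}$. That tensor product is exactly $V^G_{\omega,p,\textnormal{irr}}$ for the concatenated partition $p=(p_2,p_1)$ of $m$. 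Hence $\textnormal{Im}\,\varphi_{\omega_1,\omega_2}\subseteq \textnormal{Im}\,\psi_\omega$, and surjectivity follows.

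\emph{Main obstacle.} The difficulty is that the tensor products are completed and the spaces may be infinite dimensional. In the discrete finite dimensional case, the surjectivity of $\bigoplus_{p_i} \to V_{\omega_i}^G$ passes to tensor products by right exactness of $\otimes$, and finite sums commute with $\otimes$. Here I would argue in two steps.

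First, the relevant index sets are finite: there are finitely many ordered partitions of $m$ and finitely many 2-partitions. So the direct sums are finite and coincide with products, and the completed tensor product commutes with them.

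Second, $\widehat{\otimes}_{\Sigma}$ is right exact on pseudocompact modules (Brumer), so it preserves surjections of pseudocompact bimodules. This requires the invariant subspaces and the chosen irreducible complements to be closed pseudocompact $\Sigma$-subbimodules. Invariants are closed, being intersections of kernels of the continuous maps $g-\textnormal{id}$, and $G$ acts $\Sigma$-compatibly. For the complement, I would choose it closed, which is possible since pseudocompact $\Sigma$-modules over the product of fields $\Sigma$ are topologically semisimple, so closed submodules have closed complements.

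With these points in place, the induction closes.
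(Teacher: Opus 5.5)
Your argument is correct, but it is organised differently from the paper's. The paper fixes $\omega$ and runs a descending induction along the filtration $0\subseteq[V_\omega^G]^m\subseteq\cdots\subseteq[V_\omega^G]^1=V_\omega^G$ by $l$-composite invariants. The bottom term $V_{a_m}^G\widehat{\otimes}_\Sigma\cdots\widehat{\otimes}_\Sigma V_{a_1}^G$ lies in $\textnormal{Im}\,\psi_\omega$ because arrow invariants are irreducible. An element of $[V_\omega^G]^{l-1}$ coming from a fixed $(l-1)$-partition is then expanded factor by factor into a tensor of irreducibles plus terms containing a composite factor, and those terms land in $[V_\omega^G]^l$.

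You instead induct on path length: you apply surjectivity of $\psi_{\omega_1},\psi_{\omega_2}$ to the two halves of each 2-partition and concatenate the partitions. Both arguments rest on the same splitting $V_{\omega'}^G=V_{\omega',\textnormal{irr}}^G\oplus\textnormal{Im}\,\varphi_{\omega'}$ for subpaths. The paper's version has the advantage of staying inside one fixed $\omega$. Yours is more modular, and it makes explicit the topological inputs that the paper's multilinear expansion of $v$ tacitly needs in the completed setting but never states: closed complements, and the fact that $\widehat{\otimes}_\Sigma$ preserves surjections and finite sums.

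One small step is missing on your side. Before invoking closed complements, observe that $\textnormal{Im}\,\varphi_\omega$ is itself closed in $V_\omega^G$. This holds because it is the image of a continuous homomorphism out of the pseudocompact module $\bigoplus_{\omega_2\omega_1=\omega}V_{\omega_2}^G\widehat{\otimes}_\Sigma V_{\omega_1}^G$.
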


\begin{proof}	
	It is defined a natural filtration on the invariants and the proof follows by induction. 
	
	For each ordered partition $p = (m_l, \dots, m_1)$ of $m$ (the length of $\omega$) consider the map:
	\begin{equation*}
		\varphi_{\omega,p}: V_{\omega_{m_l}}^G \widehat{\otimes}_{\Sigma} \cdots \widehat{\otimes}_{\Sigma} V_{\omega_{m_1}}^G \to V_{\omega}^G.
	\end{equation*}
	
	Denote the image of the sum of the maps $\varphi_{\omega,p}$ along all the $l$-partitions of $m$ by $[U_{\omega}^G]^l$, called the space of $l$-composite invariants. Thus the 2-composite invariants are the space of composite invariants. The following filtration holds:
	\begin{equation*}
		0 \subseteq [V_{\omega}^G]^m \subseteq \cdots \subseteq[V_{\omega}^G]^1 = V_{\omega}^G.
	\end{equation*}
	
	Observe that
	\begin{equation*}
		[V_{\omega}^G]^m = V_{a_m}^G \widehat{\otimes}_{\Sigma} \cdots \widehat{\otimes}_{\Sigma} V_{a_1}^G,
	\end{equation*}
	which is in the image of $\psi_{\omega}$. 
	
	Assume that $[V_{\omega}^G]^l \subseteq \textnormal{Im}\,(\psi_{\omega})$. Let $v \in [V_{\omega}^G]^{l-1}$ and suppose that $v$ is obtained from a fixed $l-1$ partition. Since $v \in V_{\omega}^G$, $v$ can be decomposed as a sum of two terms: tensors of irreducible invariants, which belong by definition to $\textnormal{Im}\,(\psi_{\omega})$; or $(l-1)$-tensors which contain at least one composite invariant, so belonging to $[V_{\omega}^G]^l$ which is contained in $\textnormal{Im}\,(\psi_{\omega})$ by hypothesis. 
	The general term is simply a sum of such terms, completing the proof.
\end{proof}

\begin{prop}
	Consider $V_{a_n} = V$ for all $n \in \{1, \dots, m\}$, i.e. $V_{\omega} = V^{\widehat{\otimes}_{m}}$. Then, the map $\psi_{\omega}$ is bijective.
\end{prop}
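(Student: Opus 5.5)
Since surjectivity of $\psi_{\omega}$ is the preceding Proposition, only injectivity needs proof. The plan is to reduce it to the free case, where it follows from Theorem \ref{teo:invariants_power_series_ring} and Theorem \ref{teo:Kharchenko-Lane}. When $V_{a_n}=V$ for all $n$, $\Sigma$ acts through a single vertex, so we are in the setting $\widehat{T}_k(V)$. There the homogeneous component $V^{\widehat{\otimes}_m}$ of the invariant algebra is $(V^{\widehat{\otimes}_m})^G$. By the previous section, this invariant algebra is a ring of non-commutative power series whose associated graded ring $gr[\widehat{T}_k(V)]^G$ is free on a homogeneous set. So the key steps are: identify $V_{\omega}^G$ with the degree $m$ part $gr[\widehat{T}_k(V)^G]_m$; identify the irreducible invariants $V^G_{\omega_j,\textnormal{irr}}$ in each degree $j$ with the span of the free generators of degree $j$; then read off injectivity from freeness.

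For the second step, set $R=gr[\widehat{T}_k(V)]^G=\bigoplus_j R_j$ and $R_+=\bigoplus_{j\geq1}R_j$. The space of composite invariants in degree $j$ is $\sum_{a+b=j,\,a,b\geq1}R_aR_b=(R_+^2)_j$. The free generating set $X$ given by Theorem \ref{teo:Kharchenko-Lane} can be taken homogeneous, and for a connected graded free algebra the span of $X$ in degree $j$ is a complement of $(R_+^2)_j$ in $R_j$. Conversely, any homogeneous complement $W_j$ of $(R_+^2)_j$ in $R_j$ also freely generates $R$. This is the standard fact that in a connected graded free algebra any graded lift of a basis of $R_+/R_+^2$ is a free generating set, which can be proved by induction on degree using the weak algorithm (Definition \ref{def:weak_algorithm}). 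Hence our fixed choice $V^G_{\omega_j,\textnormal{irr}}=W_j$ freely generates the invariant algebra.

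With that in hand, $\bigoplus_p V^G_{\omega,p,\textnormal{irr}}$ is the degree $m$ part of the tensor algebra $T_k(W)$, where $W=\bigoplus_j W_j$: the summand for $p=(m_l,\dots,m_1)$ is $W_{m_l}\otimes\cdots\otimes W_{m_1}$. The map $\psi_{\omega}$ is the degree $m$ component of the canonical algebra map $T_k(W)\to R$. Freeness means exactly that this map is an isomorphism, so $\psi_{\omega}$ is injective. In degree $m$ all spaces are finite-dimensional if $V$ is, so the complete tensor products agree with ordinary ones. Otherwise, I would note that the argument only uses the algebraic relations among homogeneous elements, which are the same in the completion.

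The main obstacle is the independence of the choice of complement: Kharchenko–Lane provides \emph{some} homogeneous free generating set, while the statement fixes an arbitrary complement. The first thing I would try is the graded Nakayama-type argument. Using surjectivity, which is already proved, the generated subalgebra $k\langle W\rangle$ equals $R$. Then I would compare dimensions degree by degree in the finite-dimensional case, or use the $n$-term weak algorithm to show that a graded minimal generating set of a free algebra is free. A secondary subtlety is checking that the tensor products over $\Sigma$ in the definition reduce to $k$-tensor products under the hypothesis $V_{a_n}=V$, which I would handle by restricting to the single-vertex situation.
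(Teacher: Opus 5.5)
Your proposal is correct and follows essentially the paper's route: use Theorem \ref{teo:invariants_power_series_ring} (via Kharchenko--Lane) to get a homogeneous free generating set of the invariant algebra, observe that its degree-$j$ part is a complement of the composite invariants $(R_+^2)_j$, and read off the bijectivity of $\psi_{\omega}$ from freeness. The one place you go further is the change of complement. The paper passes from its $U_n$ to the fixed complement only through the abstract isomorphism $U_i \cong (V^{\widehat{\otimes}_i})^G_{\textnormal{irr}}$, whereas you justify, by surjectivity plus a dimension count or by the weak algorithm, that any homogeneous complement itself freely generates, which is what the specific map $\psi_{\omega}$ actually requires.
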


\begin{proof}	
	Let $\widehat{T}_{k}(V)^G = k \times V^G \times (V \widehat{\otimes}_{\Sigma} V)^G \times \cdots$ be the algebra of invariants of $G$. By Theorem \ref{teo:invariants_power_series_ring}, there exists a homogeneous $k$-subbimodule $U \subseteq \widehat{T}_{k}(V)^G$ such that $\widehat{T}_{k}(U) = \widehat{T}_{k}(V)^G$.
	
	Write $U_n = U \cap (V^{\widehat{\otimes}_{n}})^G$, for each $n \geq 1$. Claim: $U_n$ is a vector space of irreducible invariants for every $n$. For $n = 1$ it is clear since $V^G$ is irreducible. Assume that $U_i$ is a space of irreducible invariants of degree $i$, for every $i < n$. Because the composites (in degree $n$) are sums of (complete tensor) products of irreducible of lower degree, they must be obtained from tensors of the $U_i$'s, for $i < n$. Moreover, since $\widehat{T}_{k}(U)$ is a ring of non-commutative power series rings, the intersection of the composites with $U_n$ is zero. Hence $U_n$ is a space of irreducible invariants.
	
	The isomorphisms $U_i \cong (V^{\widehat{\otimes}_{i}})^G_{\textnormal{irr}}$ and $\bigoplus_p(U_{n_l} \widehat{\otimes}_{\Sigma} \cdots \widehat{\otimes}_{\Sigma} U_{n_1}) \cong (V^{\widehat{\otimes}_{n}})^G$, where $p = (n_l, \dots, n_1)$ runs through all partitions of $n$, gives the desired bijection.
\end{proof}

\begin{prop}
	\label{teo:space_of_irreducibles_invariants}
	The map $\psi_{\omega}$ is bijective for any path $\omega$.
\end{prop}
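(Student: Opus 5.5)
Surjectivity of $\psi_{\omega}$ is the first proposition above, so only injectivity remains. The plan is to reduce to the previous proposition, the case $V_{a_n}=V$ for all $n$, in the spirit of \cite[Theorem 3.9]{CMarcos}. The idea is to embed the path space $V_{\omega}$ as a $kG$-direct summand of $V^{\widehat{\otimes}_m}$ in a way compatible with subpaths, and then read off injectivity from the bijection already established there.

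First, since $G$ is invariant on $\Sigma$ and homogeneous, each $g$ permutes the idempotents $e_i$. Hence the decomposition
\begin{equation*}
V^{\widehat{\otimes}_m}=\prod_{\omega'} V_{\omega'},
\end{equation*}
over all paths $\omega'$ of length $m$, is permuted by $G$. Assume first, as in \cite{CMarcos}, that $G$ fixes the vertices, so that each $V_{\omega'}$ is a $kG$-submodule. Then
\begin{equation*}
(V^{\widehat{\otimes}_m})^G=\prod_{\omega'} V_{\omega'}^G ,
\end{equation*}
and the projection onto the $V_{\omega}$ factor is $G$-equivariant. The same holds in every degree, and the factorization $V_{\omega'}=V_{\omega'_2}\widehat{\otimes}_\Sigma V_{\omega'_1}$ respects these products, since $e_i$-components multiply to zero unless the paths compose.

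Next, I check that the composite invariants split along this decomposition. Explicitly, the composite invariants of $(V^{\widehat{\otimes}_m})^G$ are the product over $\omega'$ of the composite invariants $\operatorname{Im}\varphi_{\omega'}$. Consequently a space of irreducible invariants for $V^{\widehat{\otimes}_m}$ may be chosen as $\prod_{\omega'}V^G_{\omega',\mathrm{irr}}$. I would argue that choosing these compatible complements is harmless: the bijectivity of $\psi$ does not depend on the choice of complement, because any two complements differ by composite invariants, and composite invariants are images of strictly finer partitions. I would verify this last point by a triangular argument along the filtration $[V_{\omega}^G]^l$.

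With these compatible choices, $\psi$ for $V^{\widehat{\otimes}_m}$ is the product over $\omega'$ of the maps $\psi_{\omega'}$. By the previous proposition this product is bijective, so each factor $\psi_{\omega}$ is injective, which gives the claim.

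The main obstacle is the case where $G$ permutes vertices nontrivially, since then $V_{\omega}$ is not $G$-stable and $V_{\omega}^G$ would have to be interpreted on $G$-orbits of paths. Here I would replace $\omega$ by the orbit sum $\prod_{g}V_{g\omega}$ and run the same argument orbitwise. I expect the most care to be needed in justifying independence of the chosen complement; a secondary point is that the complete tensor products commute with the relevant products, which follows from the pseudocompact setting.
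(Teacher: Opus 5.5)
Your reduction is circular at the step ``By the previous proposition this product is bijective.'' The previous proposition is only available in the one-vertex setting. Its proof rests on Theorem~\ref{teo:invariants_power_series_ring}, i.e.\ Cohn's characterization of power series rings via the inverse weak algorithm, which needs $A/A_{[1]}=k$. So it applies to $\widehat{T}_k(V)$, not to $\widehat{T}_{\Sigma}(V)$ with several vertices. You apply it to $V^{\widehat{\otimes}_m}$ formed over $\Sigma$ for the whole quiver, which splits only over composable paths $\omega'$. But bijectivity of $\prod_{\omega'}\psi_{\omega'}$ is equivalent to bijectivity of every $\psi_{\omega'}$ with $\omega'$ of length $m$, which is exactly the statement being proved. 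The only other source for it would be that $\widehat{T}_{\Sigma}(V)^G$ is a complete tensor algebra (Theorem~\ref{teo:invariant_complete_path_algebra}), and that is deduced from this very proposition.

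The missing idea, which is the paper's route (following Cibils--Marcos), is to forget the vertices. Regard the product $W$ of the distinct arrow spaces $V_{a_t}$ occurring in $\omega$ as a pseudocompact $k$-space with its $G$-action; each $V_{a_t}$ is $G$-stable since $G$ fixes the idempotents. Then $\widehat{T}_k(W)$ is a power series ring, and the one-vertex proposition applies to $W^{\widehat{\otimes}_m}$ taken over $k$. Over $k$ this splits over \emph{all} sequences of these arrow spaces, composable or not, compatibly with invariants, composite invariants and $\psi$. The factor for the sequence $(a_m,\dots,a_1)$ is $V_{a_m}\widehat{\otimes}_k\cdots\widehat{\otimes}_k V_{a_1}\cong V_{\omega}$: consecutive arrow spaces interact only through $e_{i_t}$, which acts as the identity on both sides, so $\widehat{\otimes}_{\Sigma}$ and $\widehat{\otimes}_k$ agree there and the 2-partitions correspond. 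In that setting your splitting of composite invariants carries over directly. Your triangular argument for independence of the chosen complement also carries over, and it usefully fills a point the paper leaves implicit. The orbitwise discussion is unnecessary: the setting already has $G$ fixing the vertices, which is what makes each $V_{\omega}$ a $kG$-module in the first place.
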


\begin{proof}	
	Let $V = V_{a_1} \times \cdots \times V_{a_n}$ and $V_{\gamma} = V^{\widehat{\otimes}_{m}}$. Then, $\psi_{\gamma}$ is the direct sum of the maps $\psi_{a_{n_m}\dots a_{n_1}}$ along all the sequences $(n_m,\dots,n_1)$ of integers belonging to $\{1,\dots,m\}$, which is bijective. Hence, all those maps are invertible, in particular the one corresponding to the path $\omega$ (i.e. the sequence $(m, \dots, 1)$).
\end{proof}

%
\begin{theorem}
	\label{teo:invariant_complete_path_algebra}
	Let $G$ be a homogeneous group of continuous algebra automorphism of the complete tensor algebra $\widehat{T}_{\Sigma}(V)$ invariant on $\Sigma$. The algebra of invariants $\widehat{T}_{\Sigma}(V)^G$ is isomorphic to a complete tensor algebra.
\end{theorem}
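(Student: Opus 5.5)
The plan is to construct an explicit pseudocompact $\Sigma$-bimodule $U$ of irreducible invariants and show that the universal property of the complete tensor algebra gives an isomorphism $\widehat{T}_{\Sigma}(U)\to\widehat{T}_{\Sigma}(V)^G$.

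\textbf{Construction of $U$.} Since $G$ is invariant on $\Sigma$ and homogeneous, each idempotent $e_i$ is fixed. Hence $V^{\widehat{\otimes}_n}$ decomposes as a product over paths, and we expect $(V^{\widehat{\otimes}_n})^G=\prod_{\omega} V_\omega^G$, the product running over the spaces of path $\omega$ of length $n$. Each factor $V_\omega^G$ lies in $e_{t(\omega)}\,\cdot\,e_{s(\omega)}$, so this decomposition is one of $\Sigma$-bimodules. For every space of path $\omega$ we use the fixed complement $V^G_{\omega,\textnormal{irr}}$ of $\textnormal{Im}\,\varphi_\omega$ in $V_\omega^G$, and set
\begin{equation*}
U_n=\prod_{\omega\ \textnormal{of length } n} V^G_{\omega,\textnormal{irr}},\qquad U=\prod_{n\geq 1}U_n .
\end{equation*}
Each $V_\omega^G$ is closed in $V_\omega$, being an intersection of kernels of the continuous maps $g-\textnormal{id}$. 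We must check that the complement can be chosen closed, so that $U$ is a pseudocompact $\Sigma$-bimodule. Since $\Sigma$ acts on $V_\omega^G$ through the scalars at the two end vertices, this reduces to choosing a closed complement of a closed subspace in a pseudocompact vector space. Such a complement exists because pseudocompact vector spaces are duals of discrete ones, and closed subspaces correspond to quotients. We also need $\textnormal{Im}\,\varphi_\omega$ to be closed; this holds because it is a continuous image of compact-like (linearly compact) spaces.

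\textbf{The homomorphism.} Take $\alpha_0:\Sigma\to\widehat{T}_{\Sigma}(V)^G$ to be the inclusion. Note that $\Sigma$ consists of invariants. Take $\alpha_1:U\to \widehat{T}_{\Sigma}(V)^G$ to be the inclusion. The universal property (\cite[Lemma 2.11]{IM}) then gives a continuous algebra homomorphism $\alpha:\widehat{T}_{\Sigma}(U)\to\widehat{T}_{\Sigma}(V)^G$. We grade $\widehat{T}_{\Sigma}(U)$ by the total degree coming from $U_n$. The degree-$n$ component of $\widehat{T}_{\Sigma}(U)$, restricted to $e_j\,\cdot\,e_i$ and to a fixed path $\omega$ of length $n$, is exactly $\bigoplus_p V^G_{\omega,p,\textnormal{irr}}$. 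On this component, $\alpha$ is $\psi_\omega$.

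\textbf{Bijectivity.} By Proposition \ref{teo:space_of_irreducibles_invariants}, each $\psi_\omega$ is bijective. Since $\alpha$ respects degrees, it is the product over $n$ and $\omega$ of the maps $\psi_\omega$. Using homogeneity of $G$, the target also splits as $\widehat{T}_{\Sigma}(V)^G=\prod_n (V^{\widehat{\otimes}_n})^G$. Therefore $\alpha$ is bijective. A continuous bijection between pseudocompact modules is a homeomorphism, by linear compactness, so $\alpha$ is an isomorphism of pseudocompact algebras.

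\textbf{Main obstacle.} The hardest step is the topological bookkeeping. We must justify that
\begin{equation*}
U_{n_l}\widehat{\otimes}_\Sigma\cdots\widehat{\otimes}_\Sigma U_{n_1}
\end{equation*}
really decomposes as a product over paths of the corresponding $V^G_{\omega,p,\textnormal{irr}}$. This requires that complete tensor products over $\Sigma$ commute with these products. We should also confirm that $\widehat{T}_{\Sigma}(U)$, built from the grading by $U$-degree, matches the product of the graded pieces. Both points require care when $Q_0$ is infinite. The first approach is to reduce to finite-dimensional quotients, that is, finite subquivers and truncations, where the statements are those of \cite{CMarcos}, and then pass to the inverse limit. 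For a fixed path $\omega$, every factor involved lives on the finitely many vertices that $\omega$ visits.
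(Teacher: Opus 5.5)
Your proposal is correct and follows the paper's own route: the same $U=\prod_\omega V^G_{\omega,\textnormal{irr}}$, the same continuous homomorphism $\alpha:\widehat{T}_{\Sigma}(U)\to\widehat{T}_{\Sigma}(V)^G$ from the universal property, and bijectivity obtained from Proposition~\ref{teo:space_of_irreducibles_invariants}. Your identification of $\alpha$ with $\prod_\omega\psi_\omega$ is a more careful version of the paper's continuity/limit argument, and so are your closedness and linear-compactness remarks. That identification also makes injectivity and bicontinuity explicit, which the paper leaves implicit.
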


\begin{proof}	
	We construct a family of subbimodules of $\widehat{T}_{\Sigma}(V)^G$ and show that the complete tensor algebra of the product of such family gives the desired isomorphism.
	
	For each pair of vertices $i,j \in Q_0$, denote by $\Omega_{i,j}$ the set of all space of paths from $i$ to $j$. For each $\omega \in \Omega_{i,j}$ fix a space of irreducible invariants $V_{\omega,\textnormal{irr}}^G$, that is
	\begin{equation*}
		V_{\omega}^G = V_{\omega,\textnormal{irr}}^G \oplus \textnormal{Im}\,(\varphi_{\omega}).
	\end{equation*}
	Let $U_{i,j}=\prod_{\omega\in\Omega_{i,j}} V_{\omega,\textnormal{irr}}^G$ and $U=\prod_{i,j\in Q_0} U_{i,j}$. 
	
	The canonical inclusions $\iota_{\Sigma}:\Sigma\to\widehat{T}_{\Sigma}(V)^G$ and $\iota_{U}:U\to\widehat{T}_{\Sigma}(V)^G$ gives a continuous algebra homomorphism $\alpha:\widehat{T}_{\Sigma}(U)\to\widehat{T}_{\Sigma}(V)^G$ by the universal property of the complete tensor algebra.
	
	By Proposition \ref{teo:space_of_irreducibles_invariants}, for any space of path $\omega$, $V_{\omega}^G$ is either a space of irreducible invariants or is isomorphic to a direct sum of tensor products of spaces of irreducible invariants (for any fixed choice of such complements).
	
	By continuity of the elements of $G$, any invariant limit element of a convergent series of elements in $\widehat{T}_{\Sigma}(V)$ must be the limit of a subseries of invariant elements. Thus, continuity of $\alpha$ implies that such element is the image of a limit element in $\widehat{T}_{\Sigma}(U)$. Therefore, $\alpha$ is an isomorphism.
\end{proof}

\section{Invariants and representation types}
\label{sec:invariants_and_representation_types}

A complete path algebra of a connected quiver is of  tame representation type if, and only if, its quiver is infinite locally Dynkin, or is the underlying graph of an Euclidean graph, or it is of finite representation type and the underlying graph of its quiver is a simply laced Dynkin diagram, c.f. \cite[Theorem 7.22]{Simson}. Hence, a complete path algebra of finite or tame representation type is not a finite dimensional path algebra only if it is of tame representation type and its quiver is an infinite locally Dynkin or is a finite cycle.

For more on quiver representations, see \cite[\S 4]{Gabriel}, \cite{BGP}, \cite{Nazarova} and \cite{DGFKK}.

\begin{theorem}
	Let $Q$ be a quiver and $G$ be a finite group of homogeneous continuous algebra automorphism of the complete path algebra $\widehat{T}_{\Sigma}(V)$ invariant on $\Sigma$. If $\widehat{T}_{\Sigma}(V)$ is of finite or tame representation type, then $\widehat{T}_{\Sigma}(V)^G$ is of finite or tame representation type, respectively. 
\end{theorem}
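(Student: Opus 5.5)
By Theorem \ref{teo:invariant_complete_path_algebra}, $\widehat{T}_{\Sigma}(V)^G\cong\widehat{T}_{\Sigma}(U)$ for the family $U$ of irreducible invariants built there, with the same vertex set $Q_0$. So the task reduces to comparing the quiver $Q'$ of $U$ with $Q$ and applying the classification recalled above (\cite[Theorem 7.22]{Simson}). I will split into two cases according to that classification. \emph{Case 1:} $\widehat{T}_{\Sigma}(V)$ is a finite dimensional path algebra, i.e.\ the connected components of $Q$ are finite and acyclic. \emph{Case 2:} it is tame with $Q$ infinite locally Dynkin or a finite oriented cycle $\widetilde{\mathbb{A}}_n$. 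The representation type is decided componentwise, and $G$ fixes $\Sigma$, hence each $e_i$. Therefore the invariant construction is compatible with the decomposition into connected components: every path space $V_\omega$ stays inside a component. So I may assume $Q$ is connected.

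\textbf{Case 1.} When $Q$ is finite and acyclic, the completion agrees with $T_\Sigma(V)$, since only finitely many paths exist. All arrow spaces are finite dimensional and $G$ is a finite group of homogeneous automorphisms. Then \cite[Theorem 5.16]{CMarcos} applies directly, so $T_\Sigma(V)^G$ preserves finite or tame type. I would only check that the hypotheses there (finite group, homogeneous, fixing $E$) match ours.

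\textbf{Case 2.} The key structural observation is that the arrow spaces of $Q'$ are the spaces $V^G_{\omega,\mathrm{irr}}$ for paths $\omega$ of $Q$. In a locally Dynkin quiver, every $VQ_{i,j}$ has dimension at most $1$, and for the concrete diagrams $A_\infty,A_\infty^\infty,D_\infty$ there is at most one path between any two vertices. Hence each $V_\omega$ is one dimensional and $G$ acts on it by a character. The irreducible invariants are then exactly the $V_a$ with $a$ an arrow on which $G$ acts trivially, together with paths $\omega$ that are minimal among invariant paths (no invariant proper 2-partition). Each such $\omega$ contributes a single new arrow from $s(\omega)$ to $t(\omega)$. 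I claim $Q'$ has the same underlying vertex set and its underlying graph is obtained by ``shortcutting'' along invariant minimal paths. Because paths between vertices are unique and oriented, the new arrows connect vertices along the underlying tree without creating cycles or branching beyond that of $Q$. So each component of $Q'$ is again a (finite or infinite) locally Dynkin quiver, and hence of finite or tame type. For the oriented cycle $\widetilde{\mathbb{A}}_n$, the analogous argument shows that $Q'$ is a union of finite $A$-type quivers and oriented cycles; again the type is finite or tame. Moreover, if the original algebra is of finite type, then the shortcut quiver stays Dynkin.

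The main obstacle is making the shortcut argument rigorous for the cycle and for $D_\infty$, where the underlying graph is not a line. I would handle it by showing that a minimal invariant path cannot pass through a branch vertex in a way that creates a new edge between two different branches. This holds since paths are oriented and unique, so every shortcut edge lies along an existing path. For the cycle, minimal invariant paths starting at different vertices must be checked to produce at most one arrow out of each vertex. This follows from the characters on $V_\omega$ being multiplicative along the cycle.
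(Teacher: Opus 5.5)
Your reduction to connected $Q$ and your finite-dimensional case are the paper's. In the cycle case you use the paper's nesting of paths from a vertex, but you also need the dual fact that paths \emph{ending} at a vertex are totally ordered, to bound in-degrees. The paper further notes that a closed path of length $|G|\,n$ is invariant, since every character of $G$ has order dividing $|G|$. So $Q'$ is a union of oriented cycles, though your weaker conclusion already suffices.

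The gap is in the infinite locally Dynkin case. There, the claim that the ``shortcut'' quiver $Q'$ stays locally Dynkin is only asserted, and the lemma you propose for $D_\infty$ is false.
\begin{itemize}
\item Take arrows $1\to 3$, $3\to 1'$, $3\to 4\to 5\to\cdots$, and let $G=\mathbb{Z}/2$ (with $\mathrm{char}\,k\neq 2$) act by $-1$ on $1\to 3$ and $3\to 1'$ and trivially elsewhere. Then $1\to 3\to 1'$ is an irreducible invariant, so $Q'$ has an arrow $1\to 1'$ joining two different branches at $3$.
\item Take arrows $1\to 3$, $1'\to 3$, $3\to 4\to 5\to\cdots$, and let $G$ act by $-1$ on every arrow. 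Then $Q'$ has arrows $1\to 4$, $1'\to 4$, $4\to 6\to 8\to\cdots$ and $3\to 5\to 7\to\cdots$. The branch point has moved to a vertex of degree $2$ in $Q$.
\end{itemize}
In both examples $Q'$ happens to be locally Dynkin. However, ``every shortcut edge lies along an existing path'' does not exclude two vertices of degree $3$ in one component (a $\widetilde{D}_n$), a branch vertex with long arms (e.g.\ $T_{2,3,\infty}\supseteq\widetilde{E}_8$), or undirected cycles. Unlike on a line or on the cycle, paths starting or ending at a vertex are no longer totally ordered once they pass through the branch vertex; in the second example, the paths $1\to3\to4$ and $1'\to3\to4$ are incomparable. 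So the nesting argument does not transfer, and you would need a genuine global counting argument that you have not supplied.

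The paper avoids this combinatorics by passing to finite subquivers. Since $G$ fixes every $e_i$, it acts on $\widehat{T}_{\Sigma^f}(V^f)$ for each finite subquiver $Q^f$, which is Dynkin. By the finite-dimensional theorem, the invariants there are of finite type, hence the path algebra of a union of Dynkin quivers, and $\widehat{T}_{\Sigma}(V)^G$ is the inverse limit of these invariant algebras. Concretely, the quiver of $\widehat{T}_{\Sigma^f}(V^f)^G$ is the subquiver of $Q'$ formed by the irreducible invariant paths lying in $Q^f$. Every finite subquiver of $Q'$ lies in one of these, so $Q'$ is locally Dynkin. Substituting this step for your shortcut analysis closes the gap without any case analysis of orientations.
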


\begin{proof}
	Without loss of generality, we may consider $Q$ connected.
	
	In case $\widehat{T}_{\Sigma}(V)$ is finite dimensional, this follows from \cite[Theorem 5.16]{CMarcos}. 
	
	We have two cases: $Q$ is an infinite locally Dynkin quiver; or $Q$ is an oriented cycle.
	
	In the first case, any finite subquiver of $Q$, say $Q^f$, is Dynkin and the corresponding tensor algebra $\widehat{T}_{\Sigma^f}(V^f)$ is a finite dimensional path algebra of finite representation type. Since $\widehat{T}_{\Sigma}(V)$ is the inverse limit of $\widehat{T}_{\Sigma^f}(V^f)$, running through all finite subquivers of $Q$, it follows that $\widehat{T}_{\Sigma}(V)^G$ is the inverse limit of $\widehat{T}_{\Sigma^f}(V^f)^G$, which associated quiver is Dynkin, and, therefore, its associated quiver is locally Dynkin. 
	
	If $Q$ is a cycle, we analyze the corresponding quiver of $\widehat{T}_{\Sigma}(V)^G$. Given a vertex $i \in Q_0$ and paths $\omega_1$ and $\omega_2$ starting at $i$, then there exists a path $\omega'$ such that $\omega_1 = \omega' \omega_2$ or $\omega_2 = \omega' \omega_1$. Thus, for each vertex $i \in Q_0$ there is at most one irreducible invariant vector space starting at $i$, which corresponds to an arrow space of $\widehat{T}_{\Sigma}(V)^G$. Analogous argument shows that there is at most one irreducible invariant vector space ending at $j$, for each vertex $j \in Q_0$. Thus, the corresponding quiver of $\widehat{T}_{\Sigma}(V)^G$ inherits the property that each vertex is the source of a unique arrow and the target of a unique arrow. 
	
	Since $G$ is a finite group, say with order $m$, and $Q$ is a finite cycle, say with $n$ vertices, any path of length $mn$ is a cycle and belongs to $\widehat{T}_{\Sigma}(V)^G$. Thus, the associated quiver of $\widehat{T}_{\Sigma}(V)^G$ is the union oriented cycles. Therefore, $\widehat{T}_{\Sigma}(V)^G$ is tame.
\end{proof}

\bibliographystyle{plainnat}
\bibliography{references}

\end{document}